\title{Concurrent normals of immersed manifolds}
\author{Gaiane Panina, Dirk Siersma}
\abstract{It is conjectured since long that for any
convex body  $K \subset \mathbb{R}^n$ there exists a point in
the interior of $K$ which belongs to at least $2n$ normals from different points on the
boundary of $K$. The conjecture is known to be true for $n=2,3,4$.

Motivated by a recent results of Y. Martinez-Maure, and an approach by A. Grebennikov and G. Panina, we prove the following: Let a compact smooth $m$-dimensional manifold $M^m$ be  immersed in  $ \mathbb{R}^n$. We assume that    at least  one of the homology groups $H_k(M^m,\mathbb{Z}_2)$  with $k<m$  vanishes. Then   under mild conditions, almost every normal line
to $M^m$
contains an intersection point  of at least $\beta +4$ normals from different points of $M^m$, where $\beta$ is the  sum of Betti numbers of $M^m$.}
\keywords{Concurrent normals, focal sets, bifurcation, Morse-Cerf theory, tight and taut immersions}
\begin{document}

\section{Motivations}\label{motiv}

 Our first motivation comes from the case when $M^{n-1}=\partial K$ is the boundary of some convex body $K \subset \mathbb{R}^n$. It is conjectured since long that there exists a~point in
the interior of $K$ which is the intersection point of at least $2n$ normals from different points on the
boundary of $K$.
The conjecture trivially holds for $n=2$. For $n=3$ it was proven by E. Heil (see~\cite{H1} and~\cite{H2}) via geometrical methods and reproved by Pardon via topological methods.
The case $n=4$ was completed also by J. Pardon in~\cite{P}.

Recently Y. Martinez-Maure proved in~\cite{M-M} for $n=3,4$ that (under some mild conditions) almost every normal
through a~boundary point to a~smooth convex body $K$
 passes arbitrarily close to the set
of points lying on normals through at least six distinct
points of $\partial K$. This result was reproved by A. Grebennikov and G. Panina in~\cite{GP}. Extending their method, we prove related results for concurrent normals of immersed smooth compact manifolds.

Another motivation comes from \textit{tight embeddings} of manifolds.
A tight embedding of $M^m$ is such that almost every linear function has exactly $\beta(M^m, \mathbb{Z}_2)$ critical points, which is the minimal possible amount.

In this paper we consider immersions of $m$-dimensional manifolds $M^m$ into $\mathbb{R}^n$. Critical points of the squared distance function $d_y^2$ corresponds to normals through $y$.
Linear functions can be viewed as the limit to infinity of $d_y^2$.

It is also worth mentioning that there is a~close
relation between
the number of
critical points of linear functions and
the curvature of the
embedding~\cite{Kui}.

\section{The geometry of the focal set}\label{s:focal}

Consider an immersion $j: M^m \rightarrow \mathbb{R}^n$ of a~smooth $m$-dimensional connected manifold without boundary. In this paper we will omit $j$ from the notation and write $M^m \subset \mathbb{R}^n$ instead. However, we emphasize here that all constructions depend on the immersion.

\begin{definition}
Given an immersed manifold $M^m$, the \emph{normal} $\mathcal{N}_p$ at a~point $p\in M^m$ is a~line passing through $p$ and orthogonal to $M^m$ at the point $p$.    
\end{definition}

The focal set of the immersed surface $M^m$ arises naturally in this paper. We will use its definitions and properties borrowed from~\cite{M},~\cite{Po},~\cite{CR1} and~\cite{CR2} (the last two references also apply for immersions in spherical and hyperbolic spaces).
Following the standard approach, we make use of the squared distance function to $M^m$:
\[
d^2_y: M^m \to \mathbb{R}, \ \ d(x) =||x-y ||^2.
\]

Given a~point $y\in \mathbb{R}^n$, all the normals passing through $y$ can be read off the function $d_y^2$:
\textit{ A point $y$ lies on the normal $\mathcal{N}_p$ for some $p\in M^m$
  iff
   $p$ is a~critical point of the function $d_y^2$.} So, the number of concurrent normals through a~point $y$ is equal to the number of critical points of $d_y^2$.

By Sard's theorem type arguments (\cite{M}, Theorem 6.6), $d_y^2$ is a~Morse function for almost all $y$.

\begin{definition}
The bifurcation diagram of $d_y^2$ (cf.~\cite{A}) is called the \emph{focal set} $\mathcal{F}_M$ of $M^m$. In other words, the focal set coincides with the set of points $y \in \mathbb{ R}^n$ where the function $d^2_y$ is not a~Morse function (see~\cite{M}).

 An equivalent definition (appearing in~\cite{M}) is the following: the focal set $\mathcal{F}_M$ is the set of critical values of the evaluation map
\[
e: \mathcal{N(}M^m) \rightarrow \mathbb{R}^n \; \; \;\; \; \; \; e(p,t \mathbf{n}) =p + t \mathbf{n}
\]
from the normal bundle of $M^m$, where $p \in M^m$ and $\mathbf{n}$ is a~unit normal vector at $p$. 
\end{definition}

Note that $e$ is a~map between manifolds of the same dimension. We will discuss below the structure of its critical set $\mathcal{C}_{e}$.

 The elements of $\mathcal{F}_M$ are called \textit{focal points}. They generalize the centers of principal curvatures of hypersurfaces. Generically the dimension of $\mathcal{F}_M$ is $n-1$, but for instance, in the case of the round sphere it degenerates to a~point.

\begin{definition}
 For all $y \in \mathcal{N}_p$, the point $p$ is critical for $d^2_y$, and for almost all $y\in \mathcal{N}_p$ the critical point $p$ is non-degenerate.
We say that a~point $y\in \mathcal{N}_p$ is a~\textit{$p$-focal point} if $p$ is a~degenerate critical point of the function $d^2_y$.
\end{definition}

The Morse index of $d^2_y$ at $p$ for a~non-$p$-focal point $y\in \mathcal{N}_p$ is equal to the number the $p$-focal points lying between $p$ and $y$, counted with multiplicity. The multiplicity $\nu$ is defined as the nullity of the Jacobian of $e$ at the point $(x, t \mathbf{n})$, which equals to the nullity of the 2-jet of $d^2_y$ at $y = x+ t \mathbf{n}$~\cite{M}.

Let $y=p+t\mathbf{n}$ be a~point on $\mathcal{N}_p$. Once the normal vector $\mathbf{n}$ is fixed, the set of $p$-focal
points on the normal becomes linearly ordered.
We enumerate them as follows: $r_j$ is the $j$-th $p$-focal point contained in $\{t\geq0\}$ part of $\mathcal{N}_p$, counting from the point $p$, and $r_{-j}$ is the $j$-th $p$-focal point contained in $\{t\leq 0\}$ part, also counting from the point $p$, counted with multiplicity.

We shall also consider the \textit{compactified normal line} $\mathcal{N}_p$: adding the point $\infty$ turns the normal to a~circle. The $m$ points $\{r_i\}$ are indexed cyclically mod $m$ by numbers $1,\dots,m$, which fits with the earlier defined notation: $r_i=r_{-m-1+i}$.

 This ordering also yields sections of the normal bundle which stratify the critical set $\mathcal{C}_e$. The regular points of $\mathcal{C}_e$ correspond to focal points with $\nu=1$, whose neighborhoods are smooth codimension-$1$ submanifolds. The singular points correspond to focal points of higher multiplicity.
The singularities of the focal set consist not only of the images of points with $\nu > 1$ but also of self intersections and singularities of the images of the points with $\nu = 1$.

In the case $\nu=1$ it follows that $d^2_y$ has a~singularity of corank 1 at $p$, so must be of type $A_k$ with $k \ge 2$. In the generic case this type is $A_2$. Also, the Jacobian of the evaluation map $e$ has corank 1. This means (see~\cite{GG}) that in the generic case $e$ is a~submersion with folds, and $\mathcal{F}_M$ is locally the image of the fold set.
It is also known that the normal $\mathcal{N}_p$ at a~$p$-focal points is tangent to the focal set (which is a~well known property for evolutes of curves); but if the type is $A_3$ or higher then the focal set is not necessarily smooth (as is known for the evolute).

The focal set $\mathcal{F}_M$ cuts the ambient space $\mathbb{R}^n$ into \textit{chambers} (that is, connected components of the complement of $\mathcal{F}_M$).
The type of the associated Morse functions $d^2_y$ depends only on the chamber containing $y$.

Transversal crossing of the focal hypersurface at its smooth $A_2$ point amounts to a~birth (or death) of two critical points of the function $d_y^2$ whose indices are $i$ and $i-1$.

We end this section with the folowing definition:

\begin{definition}
We say that the normal $\mathcal{N}_p$ is \textit{regular} if
\begin{itemize}
\item $\mathcal{N}_p$ does not intersect the singularities of $\mathcal{F}_M$,
\item $\mathcal{N}_p$ intersects the focal set transversally, except for the $p$-focal points,
\item $d_y^2$ is of type $A_2$ at the point $p \in M^m$ for all non-$p$-focal points
$y \in \mathcal{N}_p \cap \mathcal{F}_M$,
\item there are exactly $m$ different $p$-focal points on $\mathcal{N}_p$.
\end{itemize}
\end{definition}

 The last condition excludes $p$-focal points at infinity (if the Gauss curvature at $p$ is zero). For generic immersions almost every normal is regular. Regularity implies that the focal set is a~hypersurface close to the intersections with the normal.

\section{The concurrent normals theorem}

Let us recall that Morse functions $f:M^m \rightarrow \mathbb{R}$ on a~compact manifold satisfy the\textit{ Morse inequalities}:
The number of critical points of index $i$ does not exceed the Betti number $\beta_i(M^m,\mathbb{Z}_2)$. Therefore, the number of concurrent normals {from a~non-focal point $y$} is at least the sum of Betti numbers $\beta=\beta_0(M^m,\mathbb{Z}_2)+\dots+\beta_{m}(M^m,\mathbb{Z}_2)$.

\begin{definition}
A point $y \notin \mathcal{F}_M$ is called an \textit{excess-k point} if $y$ is an intersection point of at least $\beta +k$ normals from different points on $M^m$.
\end{definition}

\begin{definition}
A compact closed manifold $M^m$ is called \textit{$i$-trivial} (for some $0<i<m$), if the $i$-th Betti number vanishes: $\beta_{i} (M^m, \mathbb{Z}_2)=0$.
In this case, by Poincar\'{e} duality, we also have $\beta_{m-i} (M^m,\mathbb{Z}_2)=0$.
\end{definition}

\begin{theorem}\label{ThmMain}
Let $M^m \subset \mathbb{R}^n$ be an immersed $C^2$-smooth compact manifold. For any regular normal $\mathcal{N}_p$ holds:
  \begin{enumerate}
  \item\label{itm1}   $\mathcal{N}_p$
contains an intersection point of at least $\beta +2$ normals from different points on $M^m$.
 \item\label{itm2} If $M^m$ is an $i$-trivial manifold, then $\mathcal{N}_p$
contains an excess-4 point, i.e., the intersection of at least $\beta +4 $ normals.
\end{enumerate}
\end{theorem}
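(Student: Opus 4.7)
The plan is to analyze $N(y) := |\mathrm{Crit}(d_y^2)|$ as $y$ traverses the compactified normal $\overline{\mathcal{N}_p} \cong S^1$, using Morse theory on $M^m$ together with a Cerf-theoretic analysis of the bifurcations. Under the regularity hypothesis, the only bifurcations along the loop are of $A_2$ type: at a transversal non-$p$-focal crossing of $\mathcal{F}_M$, two adjacent-index critical points are born or die, so $N$ jumps by $\pm 2$; at a tangential $p$-focal crossing, $N$ is unchanged while $p$ and a nearby partner swap adjacent indices. Consequently $N(y)$ has constant parity (equal to that of $\beta$), satisfies $N(y)\geq \beta$ everywhere by the Morse inequalities, and has $N(\pm\infty)=N_h\geq\beta$, the count of critical points of the height function $h:=\mathbf{n}_p\cdot x$. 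The Morse index of $p$ in $d_y^2$ changes by $\pm 1$ at each $p$-focal point, running from $0$ at $y=p$ to $a$ at $+\infty$ and to $b$ at $-\infty$, where $a+b=m$ records the signs of the principal curvatures at $p$ in direction $\mathbf{n}_p$.

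For part (1), I would argue that the topology of the critical-point 1-manifold $C_p := e^{-1}(\overline{\mathcal{N}_p})$ forces at least one non-$p$-focal pair event on $\mathcal{N}_p$. The trivial component $\{p\}\times S^1\subset C_p$ has winding number $1$, while the other components must, upon completing the loop, reconcile the flip of indices at $\infty$: each non-trivial critical point $q$ has index $m-\mathrm{ind}_h(q)$ at $+\infty$ but $\mathrm{ind}_h(q)$ at $-\infty$. This cannot be resolved by $p$-focal swaps alone, since those affect only $p$ and its partner. Once one pair creation is established, the balance of creations and destructions around $S^1$ gives $\max N\geq \min N + 2\geq \beta+2$.

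For part (2), set $i^*:=\min(i,m-i)\leq m/2$. Since $\max(a,b)\geq m/2\geq i^*$, the index of $p$ attains $i^*$ at some $y^*\in\mathcal{N}_p$, so there is at least one index-$i^*$ critical point of $d_{y^*}^2$. Using $H_{i^*}(M^m,\mathbb{Z}_2)=0$, the Morse chain-level rank inequality $c_{i^*-1}+c_{i^*+1}\geq c_{i^*}$ combined with the standard bounds $c_k\geq \beta_k$ and the parity of $N$ yields $N(y^*)\geq \beta+2$. To strengthen this to $\beta+4$, I would exploit that $\beta_{i^*}=\beta_{m-i^*}=0$ provides two independent homological constraints: either the index of $p$ also attains $m-i^*$ at a second point $y^{**}$, producing two disjoint arcs on which $N\geq \beta+2$, or the $(m-i^*)$-trivial constraint forces a further pair creation disjoint from the first. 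A Cerf-diagram analysis then shows that the two pair creations stack within a common chamber, giving $N\geq \beta+4$ at some intermediate $y$.

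The main obstacle is this final stacking step: ruling out the scenario where the two pair creations are separated by an intervening pair destruction, which would give only $\max N\geq \beta+2$. Carrying it out requires detailed tracking of the Cerf graphic around $\overline{\mathcal{N}_p}$ and constitutes the essential technical extension of the Grebennikov--Panina method from convex bodies (where $M=S^{n-1}$ is $i$-trivial for all $0<i<n-1$) to arbitrary $i$-trivial immersed manifolds.
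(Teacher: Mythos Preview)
Your framework—traversing the compactified normal and tracking Cerf-type bifurcations (transversal birth/death versus tangential index-exchange at $p$-focal points)—is exactly the paper's setup. However, both parts of your proposal miss the one short lemma that drives the argument, so part (1) becomes unnecessarily global and part (2) retains the gap you yourself flag.

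The observation you are missing is this: at a $p$-focal point $r_k$ the normal $\mathcal{N}_p$ is \emph{tangent} to the focal hypersurface, so locally near $r_k$ it stays in a single chamber and merely touches the closure of an adjacent chamber with two fewer critical points, of indices $k$ and $k-1$. Since that adjacent chamber still satisfies the Morse inequalities, the chamber containing $\mathcal{N}_p$ near $r_k$ has an \emph{extra} critical point of index $k$ and an extra one of index $k-1$. This immediately gives part (1): any regular normal carries a $p$-focal point, hence a point where $N(y)\geq\beta+2$. Your route via the $1$-manifold $C_p$ is not needed, and its details are shaky: $\{p\}\times S^1$ is not a connected component of $C_p$, since at each $r_k$ it meets the partner arc in an $A_2$ degeneration, so the winding-number reasoning does not go through as stated.

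For part (2) the paper does not attempt to stack two pair-creations globally; it \emph{localizes to the segment} $[r_i,r_{i+1}]$, on which $p$ has constant Morse index $i$. Assume for contradiction that $N\leq\beta+2$ there. By the lemma above, near $r_i$ the two extras sit at indices $\{i-1,i\}$ and near $r_{i+1}$ at $\{i,i+1\}$, and in either case the counts are exactly $c_j=\beta_j$ for all other $j$. Since $\beta_i=0$, the unique index-$i$ critical point on this segment is $p$ itself, and $p$ cannot participate in any birth/death (it persists along its own normal). Now the extra index-$(i{-}1)$ point near $r_i$ cannot die: its only possible $A_2$ partners have index $i$ (only $p$, unavailable) or index $i-2$ (but $c_{i-2}=\beta_{i-2}$, so a death would violate the Morse inequality); and any prior birth would already force $N\geq\beta+4$. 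Symmetrically, the extra index-$(i{+}1)$ point near $r_{i+1}$ cannot have appeared. Hence the distribution of extras cannot move from $\{i-1,i\}$ to $\{i,i+1\}$, a contradiction. This is exactly what dissolves your ``stacking'' obstacle: the hypothesis $\beta_i=0$ is not used to manufacture a second pair-creation elsewhere on the loop, but to pin $p$ as the sole index-$i$ point on $[r_i,r_{i+1}]$ and thereby block every bifurcation compatible with $N\leq\beta+2$.
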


\begin{remark}
One can localize the positions of excess-$4$ points:
 there exist excess-$4$ points on the segment $[r_i(p),r_{i+1}(p)]$, and also on the segment $[r_{-i-1}(p),r_{-i}(p)]$, provided that $H_i(M^m, \mathbb{Z}_2)=0$ (here, we use our modulo $m$ notation for the indices). It is clear that unless $i=\frac{m}{2} $, there are two such segments, since $\beta_i=0$ implies $\beta_{m-i}=0$. If $i=\frac{m}{2} $ there is a single interval.
\end{remark}

\begin{remark}
J. Pardon's approach (\cite{P}) works for any $m$-sphere, convex or not:
{Given an immersion $j:S^m \rightarrow \mathbb{R}^n$, for any continuous $J:B^{m+1} \rightarrow \mathbb{R}^n$ such that $J$ restricted to $\partial B^{m+1}$ equals $j$, there exists an excess-$4$ point in $J(B^{m+1})$.}
\end{remark}

\begin{remark}
Tight embeddings and immersions are especially interesting: see the first reminder of section~\ref{rem}. In any case, Theorem~\ref{ThmMain} holds true for non-tight ones as well.
\end{remark}

\subsection*{Proof of Theorem~\ref{ThmMain}}\label{proof}

{For $y \notin \mathcal{F}_M$} we say \textit{the function $d_y^2$ has $q$ extra critical points of index} $i$ \ whenever the number of critical points of index $i$ is at least $\beta_i+q$.
\begin{lemma}\label{LemmaExcess}
In a~neighbourhood of the point $r_{\pm k}$ on the regular normal $\mathcal{N}_p$ there is an extra critical point of index $k$ and an extra critical point of index $k-1$.
\end{lemma}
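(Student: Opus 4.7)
The plan is to localize near $r_k$ and compare the Morse data of $d_y^2$ across the two chambers of $\mathbb{R}^n \setminus \mathcal{F}_M$ that meet at the smooth $A_2$ sheet of the focal set containing $r_k$. First, by $A_2$ singularity theory together with the fact that $p$ is a critical point of $d_y^2$ for every $y \in \mathcal{N}_p$, I would choose local coordinates $(\xi, \eta_1, \ldots, \eta_{m-1})$ on $M^m$ centered at $p$ (with $\xi$ the degenerate direction) so that, restricted to $y \in \mathcal{N}_p$ near $r_k$,
\[
d_y^2 - d_y^2(p) \;=\; \xi^3 + \xi^2\, f(y) + Q(\eta) + O(4),
\]
where $Q$ is a non-degenerate quadratic form with a fixed Morse signature and $f$ vanishes transversally at $y = r_k$. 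Solving $\partial/\partial \xi = 0$ produces exactly two critical points near $p$ as long as $f(y) \neq 0$: the point $p$ itself at $\xi = 0$ and a second point $q(y)$ at $\xi = -\frac{2}{3} f(y)$. Comparing the Hessians in the $\xi$-direction shows that the Morse indices of $p$ and $q$ swap between $(k-1, k)$ and $(k, k-1)$ as $y$ crosses $r_k$ along $\mathcal{N}_p$.

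Next I would identify the relevant chambers. By regularity of $\mathcal{N}_p$, the focal set is a smooth hypersurface in a neighborhood of $r_k$ and $\mathcal{N}_p$ is tangent to it at $r_k$; a short calculation with the normal form shows that $\mathcal{N}_p$ near $r_k$ lies in a single chamber $X$ (the side on which the discriminant is positive, so $d_y^2$ has two critical points near $p$). In the opposite chamber $X'$, across the local $A_2$ sheet, these two critical points have collided and disappeared, so $d_y^2$ has no critical points in a fixed small ball around $p$ in $M^m$ for $y \in X'$ near $r_k$.

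The crucial observation is that the bifurcation on this focal sheet is purely local: only the near-$p$ pair is born or killed there, so the critical points of $d_y^2$ lying outside a small fixed ball around $p$ depend continuously on $y$ and match up between $X$ and $X'$ in a neighborhood of $r_k$. Denoting the index counts of these far-from-$p$ critical points by $c_i^{\mathrm{far}}$, the Morse inequalities applied to $d_y^2$ for $y \in X'$ yield $c_i^{\mathrm{far}} \geq \beta_i$ for every $i$, since those are the only critical points of $d_y^2$ for such $y$. For $y \in \mathcal{N}_p \cap X$ near $r_k$ one then reads off $c_{k-1}(y) = c_{k-1}^{\mathrm{far}} + 1 \geq \beta_{k-1} + 1$ and $c_k(y) = c_k^{\mathrm{far}} + 1 \geq \beta_k + 1$, producing extra critical points of both indices simultaneously in the claimed neighborhood.

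The step I expect to require the most care is justifying that the far-from-$p$ critical points are genuinely unchanged across the local $A_2$ sheet at $r_k$: this relies on the regularity hypothesis to rule out other sheets of $\mathcal{F}_M$ passing through a small neighborhood of $r_k$, together with compactness of $M^m$ outside a fixed small disk around $p$ to propagate the continuous families of critical points through the tangent crossing. Fixing the size of this disk uniformly over the neighborhood of $r_k$ is what makes the bookkeeping of the chamber comparison rigorous.
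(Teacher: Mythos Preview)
Your argument is correct and follows essentially the same route as the paper: identify that $\mathcal{N}_p$ is tangent to the $A_2$ focal sheet at $r_k$ and therefore stays in the chamber with two nearby critical points (indices $k$ and $k-1$), then compare with the adjacent chamber where these two points are absent and invoke the Morse inequalities there. The paper states this in a compressed form without writing down the normal form or explicitly naming the Morse inequality step; your version simply unpacks these details.
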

\begin{proof}
The normal $\mathcal{N}_p$ is tangent to the focal hypersurface at the point $r_k$. While moving along the normal,
 the critical point $p$ persists, and its Morse index changes at the point $r_k$ from $k-1$ to $k$. Since a~deformation of the $A_2$ singularity can have only $0$ or $2$ Morse points, it follows that
$\mathcal{N}_p$ stays locally in the same chamber, except at the tangent point. At the tangent point $r_k$ it meets the closure of a~chamber with two fewer critical points, that is, of index $k$ and $k-1$. 
\end{proof}

Since each {regular} normal contains at least one $p$-focal point, the lemma implies the statement \ref{itm1} of the theorem.

 For a~regular normal $\mathcal{N}_p$, consider a~point $y=p+t\mathbf{n}$ moving along $\mathcal{N}_p$, starting at the point $p$.
We recall that $\mathcal{N}_p$ is compactified by adding a~point at $\infty$, so $y$ makes a~full turn and eventually returns to $p$.

The point $p$ is the first  minimum point of $d^2_y$, (its Morse index is zero, $\mu=0$), next after passing $r_1$ its Morse index turns to $1$, and so on. That is, after passing through $r_i$ we have $\mu=i$. Passing through infinity is (generically) not a~bifurcation point, but it changes each Morse index $\mu$ to $m - \mu$. In particular, minima and maxima exchange their roles. After that, the Morse index changes also in a~controlled way, that is, after passing through $r_{-i}$ we get { $\mu=1-i$.}

Combining the above with Lemma~\ref{LemmaExcess}, we get:
  \begin{enumerate}
        \item $d^2_y$ depends smoothly on $t$;
        \item $d^2_y$ is a~Morse function for each $y$ except for finitely many {bifurcation points};
        \item for a~regular normal each of the bifurcation points is $A_2$ for $d^2_y$ and has one of the two types:
          \begin{enumerate}
         \item A\textit{ birth/death point}: Transversal crossing of the focal hypersurface translates to two critical points with some neighbouring indices $k$ and $k-1$ colliding and disappearing, or conversely, two critical points with neighbouring indices appearing.
         \item An \textit{index exchange point}: A tangential point to the focal hypersurface translates to two critical points with neighbouring indices $i$ and $i-1$ colliding
         at $r_i$. There arises a~degenerate critical point of type $A_2$ which splits afterwards into two non-degenerate critical points with the same indices $i$ and $i-1$. Index exchange points are exactly the $p$-focal points $r_i$ and $r_{-i}$.
\end{enumerate}
\end{enumerate}

 Now we prove statement \ref{itm2} of the theorem.
Let
$H_i(M, \mathbb{Z}_2)$= 0. with $i\leq m/2$. Then either $r_i$ or $r_{-i}$ exists. Assume that $r_i$ exists, and further assume we have no excess-$4$ points on the segment $[r_i,r_{i+1}]$.
 The point $r_i$ is an index exchange point and by Lemma~\ref{LemmaExcess} the function $d_y^2$ has exactly one extra critical point of index $i$ and exactly one extra critical point of index
  $i-1$ around $r_i$. For the same reason there are two extra critical points of indices $i$ and $i+1$ around $r_{i+1}$.
  
Assume that $\beta+2$ is the maximal number of critical points on the segment
 $[r_i,r_{i+1}]$.
The (unique!) critical point of index $i$ is the point $p$ and it is not allowed to bifurcate. Indeed, its Morse index persists on $[r_i,r_{i+1}]$.
The extra critical point of index $i-1$ does not bifurcate since no partner for an $A_2$-bifurcation is available. The same holds around the point $r_{i+1}$ for the extra critical point of index $i+1$. 

Since a contradiction arises from having two extra critical points of indices $i-1$ and $i+1$, there exists at least one excess-$4$ point on the interval $[r_i,r_{i+1}]$.
We note that the segment $[r_i,r_{i+1}]$ might contain infinity, but this fact poses no complications, since
passing through $\infty$ maintains the number of critical points but inverts their indices.


\section{Remarks}\label{rem}

\subsection*{A reminder on tight immersions}

Given an immersed manifold $M^m$, the limit to infinity of the quadratic distance function $d_y^2$ along a~normal with direction $\mathbf n$ is the linear function $l_{\mathbf n}: M^m \to \mathbb{R}$ defined by $l_{\mathbf n}(x) = \langle x, \mathbf n \rangle$. The functions $d_y^2$ and $l_{\mathbf n}$ have the same number of critical points with the same Morse indices when $y$ is close enough to infinity. An immersion $M^m $ in $ \mathbb{R}^n$ is called \textit{tight} if the number of critical points of $l_{\mathbf n}$ equals $ \beta (M^m,\mathbb{Z}_2)$
for almost all $\mathbf{n}$. Tightness is a~generalization of convexity; a~tight embedding of the $(n-1)$-sphere as a~hypersurface is necessarily convex. Tight immersions, including the relation to total absolute curvature have been studied intensively by Chern-Lashoff, Kuiper, Banchoff and Kuehnel. See e.g.~\cite{CR1, CR2, K, Kui}.

Some useful examples:
 There exist tight immersions in $\mathbb{R}^3$ for all oriented surfaces and non-oriented surfaces with Euler characteristic smaller than $- 1$.
The projective plane, the Klein bottle and the projective plane with one handle cannot be tightly immersed in $\mathbb{R}^3$.
Examples of $4$-dimensional $i$-trivial manifolds with (tight) embedding in $\mathbb{R}^5$ are
 $S^4$, $S^2 \times S^2$ and their connected sums~\cite{K}.

\subsection*{On non-regular normals}
For generic immersions there is a~dense set of regular normals. Non-regular normals are, for instance, those based at umbilical points and based at points with zero Gaussian curvature, where one of the centers of curvature is at infinity. But there are many cases where regular normals do not occur. We mention here the round sphere and cylinder. This happens also in the case where $d^2_y$ has exactly $\beta$ critical points for a~dense set of $y$. Such immersions are called \textit{taut immersions}, which are very special, see e.g.~\cite{CR1}. Taut is much more restrictive than tight. Their focal sets are very degenerate.

If parts of the focal set have dimension $\le n-2$, then we will get an infinite number of normals from these focal points. To be more precise: Consider the evaluation map $e$ from one of the $n-1$ dimensional strata of $\mathcal{C}_e$. If this map reduces dimension, then for an image point $y$ the quadratic distance $d^2_y$ has non-isolated singularities.

Note also that for certain non-regular normals the proof of theorem~\ref{ThmMain} still applies, e.g. when $M$ is $i$-trivial and the focal points $r_i$ and $r_{i+1}$ have multiplicity $\nu=1$.

\subsection*{Tubes over an immersed manifold} We describe below a~higher excess estimate for tubes.
Given an immersion $M^m \subset \mathbb{R}^n$, the \textit{tube of $M^m$} with radius $r> 0$ is defined as the restriction of the evaluation map $e$ to the sphere bundle $S_r(M)$, consisting of normal vectors of length $r$. For sufficiently small $r$ we get an immersion
 $S_r(M) \subset \mathbb{R}^n $.

 \begin{proposition}
 Let $y \notin \mathcal{F}_M \cup M^m$ be an excess-$k$ point for $M^m$. Then $y$ is an excess-$2k$ point for $S_r(M)$.
 \end{proposition}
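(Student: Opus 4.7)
My plan is to show that each normal of $M^m$ through $y$ gives rise to exactly two normals of the tube $S_r(M)$ through $y$, and then to bound $\beta(S_r(M)) \le 2\beta(M^m)$ via the Gysin sequence of the normal sphere bundle, so that doubling the excess count is enough.

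\textbf{Identifying the normals of the tube.} A point of $S_r(M)$ has the form $q = p + r\mathbf{n}$, with $p \in M^m$ and $\mathbf{n}$ a unit normal at $p$. I compute $T_q S_r(M)$ by differentiating curves $\gamma(s) + r\mathbf{n}(s)$ at $s = 0$: since $\gamma'(0) \in T_p M^m$ is orthogonal to $\mathbf{n}$, and since $\langle \mathbf{n}, \mathbf{n}\rangle = 1$ forces $\langle \mathbf{n}'(0), \mathbf{n}\rangle = 0$, every tangent vector at $q$ is orthogonal to $\mathbf{n}$. As $S_r(M)$ is an immersed hypersurface, its normal line at $q$ is exactly $\mathcal{N}_p$, and conversely every normal to $S_r(M)$ arises this way. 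Thus each $p \in M^m$ whose normal passes through $y$ produces two distinct points $p \pm r\mathbf{n}$ of $S_r(M)$ at which the normal to $S_r(M)$ again passes through $y$.

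\textbf{Counting and Betti numbers.} If $y$ lies on $N \ge \beta(M^m) + k$ normals of $M^m$ from distinct points, the construction above furnishes $2N$ normals of $S_r(M)$ through $y$ from distinct points of the sphere bundle (the hypothesis $y \notin M^m$ prevents the two antipodal points from collapsing, and different base points remain distinct). To finish, I apply the Gysin sequence with $\mathbb{Z}_2$-coefficients to the sphere bundle $S^{n-m-1} \to S_r(M) \to M^m$; an elementary rank count gives $\beta(S_r(M)) \le 2\beta(M^m)$ (equivalently, the Serre spectral sequence has $E_2$-page of total dimension $2\beta(M^m)$, which can only decrease under differentials). Combining, the number of normals of $S_r(M)$ through $y$ from distinct points is at least $2N \ge 2\beta(M^m) + 2k \ge \beta(S_r(M)) + 2k$, as required.

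\textbf{Main obstacle.} The geometric doubling is immediate once the tangent-space computation is done; the only nontrivial ingredient is the Betti-number inequality, where in fact there is considerable slack (the bound is sharp precisely when the mod $2$ Euler class of the normal bundle vanishes). A minor technicality, that $y \notin \mathcal{F}_{S_r(M)}$ so the excess-$2k$ definition applies for the tube, I would handle by noting that the normals of $M^m$ and $S_r(M)$ through $y$ trace out the same lines, so Morse nondegeneracy of $d_y^2$ for $S_r(M)$ follows from $y \notin \mathcal{F}_M$ together with transversality for small generic $r$.
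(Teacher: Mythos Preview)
Your argument follows the same two-step scheme as the paper: show that each normal of $M^m$ through $y$ yields two normals of $S_r(M)$ through $y$, then compare Betti numbers. The paper simply asserts the first step and that the count doubles; you justify it by the tangent-space computation, which is a welcome addition. The one substantive difference is the second step: the paper invokes the \emph{equality} $\beta(S_r(M),\mathbb{Z}_2) = 2\beta(M,\mathbb{Z}_2)$ from Cecil--Ryan, whereas you establish only the \emph{inequality} $\beta(S_r(M)) \le 2\beta(M)$ via the Gysin/Serre spectral sequence and observe that this already suffices for $2N \ge 2\beta(M)+2k \ge \beta(S_r(M))+2k$. Your route is self-contained and in a sense more robust, since only the inequality is needed and it holds without any hypothesis on the mod-$2$ Euler class of the normal bundle. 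On the minor technicality $y \notin \mathcal{F}_{S_r(M)}$: your phrase ``transversality for small generic $r$'' is not the right mechanism here; the clean fact, which the paper records immediately after its proof (again citing Cecil--Ryan), is $\mathcal{F}_{S_r(M)} = \mathcal{F}_M \cup M^m$, so the hypothesis $y \notin \mathcal{F}_M \cup M^m$ settles this directly.
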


 \begin{proof}
 Any normal $\mathcal{N}$ from a~point $y \in \mathbb{R}^n$ to $M^m$ is also a~normal to $S_r(M)$. A single intersection point $p \in M^m$ will give precisely two intersection points $p_1,p_2$ with $S_r(M)$. The normal $\mathcal{N}$ is a~normal for the points $p_1,p_2$, so the total number of critical points of $d^2_y$ doubles. The observation $\beta(S_r(M),\mathbb{Z}_2)= 2 \beta (M,\mathbb{Z}_2) $ from~\cite{CR1} completes the proof.
 \end{proof}
 
As a~corollary: taut immersions give taut tubes (see \cite{CR1}). Note that points of $M$ produce non-isolated singularities for $d^2_y$.

 For tubes, we can give more information about the locations and indices of the excess points than in the proof of
Theorem~\ref{ThmMain}.
 The focal set of a~tube $S_r(M)$ consists of two pieces:
$ \mathcal{F}_M \cup M^m$ (\cite{CR1}, Theorem~3.3). The focus points on $M^m$ all have multiplicity $\nu =n-m $. For $n-m \ge 2$ no normal to the tube can be regular.
We can still compute the index of $d_y^2$ at $p_i$ by counting the $p$-focal points with multiplicity on the coinciding normals $\mathcal{N}_{p_1}$ and $\mathcal{N}_{p_2}$. By traveling over the two normals as in the proof of Theorem~\ref{ThmMain} we see that the excess doubles. This happens in the same intervals as for $M^m$.

\subsection*{Acknowledgements}
The authors acknowledge useful discussions with George Khimshiashvili and hospitality
of CIRM, Luminy, where this research was completed in framework of "Research in Pairs"
program in November, 2022.

{\small

}

\EditInfo{January 21, 2023}{April 16, 2023}{Jacob Mostovoy and Sergei Chmutov}

\end{document}